\newcommand{\qbin}[2]{\genfrac{[}{]}{0pt}{}{#1}{#2}_q}
\newtheorem{theorem}{Theorem}
\theoremstyle{remark}
\begin{document}

\setcounter{page}{1}

\title[]{Two supercongruences related to multiple harmonic sums}

\author{Roberto~Tauraso}

\address{Dipartimento di Matematica, % \\
Universit\`a di Roma ``Tor Vergata'', % \\
via della Ricerca Scientifica,%\\
00133 Roma, Italy}
\email{tauraso@mat.uniromA1.it}

\subjclass[2010]{11A07,11B65,11B68.}
%11A07 Congruences; primitive roots; residue systems
%33C20 Generalized hypergeometric series,
%33B15 Gamma, beta and polygamma functions
%11S80 Other analytic theory (analogues of beta and gamma functions,p-adic integration
%11B65 Binomial coefficients; factorials;q-identities
%11B68   	Bernoulli and Euler numbers and polynomials
\keywords{Congruence; central binomial coefficient; harmonic sum; Bernoulli number.}

\date{\today}

\begin{abstract} 
Let $p$ be a prime and let $x$ be a $p$-adic integer. We provide two supercongruences for truncated series of the form 
$$\sum_{k=1}^{p-1} \frac{(x)_k}{(1)_k}\cdot
\frac{1}{k}\sum_{1\le j_1\le\cdots\le j_r\le k}\frac{1}{j_1^{}\cdots j_r^{}}\quad\mbox{and}\quad
\sum_{k=1}^{p-1} \frac{(x)_k(1-x)_k}{(1)_k^2}\cdot
\frac{1}{k}\sum_{1\le j_1\le\cdots\le j_r\le k}\frac{1}{j_1^{2}\cdots j_r^{2}}.$$
\end{abstract}

\maketitle

\section{Introduction and main results}

In \cite[Theorem 1.1]{Ta:10} and \cite[Theorem 7]{Ta:12} we showed that for any prime $p\not=2$,
$$\sum_{k=1}^{p-1} \frac{\binom{2k}{k}}{k 4^k}
\equiv_{p^3} -H_{(p-1)/2}\quad\text{and}\quad
\sum_{k=1}^{p-1} \frac{\binom{2k}{k}^2}{k 16^k}
\equiv_{p^3} -2H_{(p-1)/2}$$
where $H_n^{(t)}=\sum_{j=1}^n\frac{1}{j^t}$ is the $n$-th harmonic number of order $t\geq 1$.
Here we present two extensions of such congruences which involves the (non-strict) multiple harmonic sums
$$S_n(t_1,\dots,t_r):=\sum_{1\le j_1\le\cdots\le j_r\le n}\frac{1}{j_1^{t_1}\cdots j_r^{t_r}}$$
with $t_1,t_2,\dots, t_r$ positive integers. For the sake of brevity, if $t_1 = t_2 = \dots = t_r = t$ we write
$S_n(\{t\}^r)$. 

Let $(x)_n:=x(x+1)\cdots (x+n-1)$ be the Pochhammer symbol, and let $B_n(x)$ be the $n$-th Bernoulli polynomial. For any prime $p$,
$\mathbb{Z}_p$ denotes the ring of all $p$-adic integers and 
$\langle \cdot\rangle_p$  is the least non-negative residue modulo $p$ of the $p$-integral argument.

\begin{theorem}\label{MT} Let $p$ be a prime, $x\in\mathbb{Z}_p$ and $r\in\mathbb{N}$.
Let $s:=(x+\langle-x\rangle_p)/p$.

\noindent i) If $p>r+3$ then
\begin{align}\label{SI1}
\sum_{k=1}^{p-1} \frac{(x)_k}{(1)_k}\cdot
\frac{S_k(\{1\}^r)}{k}
&\equiv_{p^2}
-H^{(r+1)}_{\langle-x\rangle_p}-(-1)^rsp B_{p-r-2}(x).
\end{align}

\noindent ii) If $p>2r+3$ then 
\begin{align}\label{SI2}
\sum_{k=1}^{p-1} \frac{(x)_k(1-x)_k}{(1)_k^2}\cdot
\frac{S_k(\{2\}^r)}{k}
&\equiv_{p^3}
-2H^{(2r+1)}_{\langle-x\rangle_p}-2(2r+1)sp H^{(2r+2)}_{\langle-x\rangle_p}
\nonumber \\&\qquad\qquad
+\frac{2s(1+3sr+2sr^2)}{2r+3}\,p^2 B_{p-2r-3}(x).
\end{align}
\end{theorem}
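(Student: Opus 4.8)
The plan is to absorb the index $r$ into a generating variable, so that both families collapse to single sums of the type already handled for $r=0$. Write $A_r(x)$ and $B_r(x)$ for the left-hand sides of \eqref{SI1} and \eqref{SI2}, and set $m:=\langle-x\rangle_p$, so that $x=-m+sp$. Since $S_k(\{1\}^r)$ is the complete homogeneous symmetric function of degree $r$ in $1,\tfrac12,\dots,\tfrac1k$, one has $\sum_{r\ge0}S_k(\{1\}^r)z^r=\prod_{i=1}^k(1-z/i)^{-1}=(1)_k/(1-z)_k$ and, similarly, $\sum_{r\ge0}S_k(\{2\}^r)w^{2r}=\prod_{i=1}^k(1-w^2/i^2)^{-1}=(1)_k^2/\big((1-w)_k(1+w)_k\big)$. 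Multiplying by $\tfrac1k\tfrac{(x)_k}{(1)_k}$ and $\tfrac1k\tfrac{(x)_k(1-x)_k}{(1)_k^2}$ and summing over $k$ yields
\[
\sum_{r\ge0}A_r(x)\,z^r=\sum_{k=1}^{p-1}\frac{(x)_k}{k\,(1-z)_k}=:G_1(x,z),
\]
\[
\sum_{r\ge0}B_r(x)\,w^{2r}=\sum_{k=1}^{p-1}\frac{(x)_k(1-x)_k}{k\,(1-w)_k(1+w)_k}=:G_2(x,w).
\]
Thus $G_1$ and $G_2$ have exactly the shape of the $r=0$ sums of \cite{Ta:10,Ta:12}, with the parameter $1$ deformed to $1-z$ (resp. to the pair $1\mp w$), and it suffices to expand $G_1$ modulo $p^2$ and $G_2$ modulo $p^3$ and read off $[z^r]$ (resp. $[w^{2r}]$).

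Next I would compute $G_1$ modulo $p^2$ by splitting the range at $k=m$. For $k\le m$ no factor of $(x)_k=\prod_{i=0}^{k-1}\!\big(-(m-i)+sp\big)$ is divisible by $p$, and a first-order expansion gives $(x)_k\equiv(-1)^kk!\binom mk\big(1-sp(H_m-H_{m-k})\big)\pmod{p^2}$; for $k>m$ the single factor $x+m=sp$ forces $(x)_k\equiv sp\,(-1)^m m!\,(k-1-m)!\pmod{p^2}$. The leading term comes only from $k\le m$, and feeding the partial-fraction expansion $\frac{(k-1)!}{(1-z)_k}=\sum_{j=1}^k\frac{(-1)^{j-1}}{j-z}\binom{k-1}{j-1}$ into it, then swapping sums and using $\sum_{k=j}^m(-1)^k\binom mk\binom{k-1}{j-1}=(-1)^j$, collapses it to
\[
G_1(x,z)\equiv-\sum_{j=1}^{m}\frac{1}{j-z}=-\sum_{r\ge0}H^{(r+1)}_{m}\,z^r\pmod p,
\]
which is precisely the main term of \eqref{SI1}. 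What remains is to show that the two first-order $sp$-corrections assemble into $-sp\sum_{r\ge0}(-1)^rB_{p-r-2}(x)z^r$.

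The crux, and the main obstacle, is to identify this correction. For the tail $k>m$ I would invoke the reflection $\prod_{i=1}^{p-1}(z-i)\equiv z^{p-1}-1\pmod p$, which gives $\frac{1}{(1-z)_k}\equiv(-1)^{k+1}(1+z)_{p-1-k}\pmod{(p,\,z^{p-1})}$ and converts the $k>m$ sum into one over the complementary segment $1\le k\le p-1-m$; for the head $k\le m$ the weight $H_m-H_{m-k}$ is treated by the same partial-fraction and binomial machinery. The coefficient of $z^r$ in the total correction is then a combination of truncated power sums $\sum_j j^{-t}$ over initial and terminal segments of $\{1,\dots,p-1\}$, and $B_{p-r-2}(x)$ enters via the classical evaluation $\sum_{j=1}^{m}j^{\,n}=\frac{B_{n+1}(m+1)-B_{n+1}}{n+1}$, Fermat's congruence $j^{-t}\equiv j^{p-1-t}\pmod p$, and $B_n(m+1)=(-1)^nB_n(-m)\equiv(-1)^nB_n(x)\pmod p$. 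Matching degrees and extracting $[z^r]$ completes part i); essentially all the arithmetic content sits in this Bernoulli bookkeeping.

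Part ii) follows the same route for $G_2$, now modulo $p^3$. One expands the symmetric product $(x)_k(1-x)_k$ in powers of $sp$ (using $(1-x)_k\equiv(k+m)!/m!\pmod p$, and noting that the product is divisible by $p$ for $k>m$, so one again splits at $k=m$), applies the reflection identity to each factor $(1\mp w)_k$, and passes to $z=w^2$ to retain only even powers. Collecting contributions by $p$-adic order—the zeroth-order term, the first-order $sp$-term carried now to precision $p^2$, and the second-order $s^2p^2$-term—produces respectively the main term $-2H^{(2r+1)}_{\langle-x\rangle_p}$, the linear correction $-2(2r+1)sp\,H^{(2r+2)}_{\langle-x\rangle_p}$, and the quadratic correction $\frac{2s(1+3sr+2sr^2)}{2r+3}p^2B_{p-2r-3}(x)$, in which the $sp^2$ part stems from the first-order term and the $s^2p^2$ part from the second-order term. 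The extra difficulty is twofold: the first-order contribution must be controlled to modulo $p^2$, and the second-order weights $(H_m-H_{m-k})^2$ and $H^{(2)}_m-H^{(2)}_{m-k}$ must be tracked as generating functions, so the Bernoulli matching now requires the sharper congruences for $\sum_j j^{-t}$ and for $H^{(t)}_{p-1}$ in terms of $B_{p-t}$, with the symmetry $w\mapsto-w$ used throughout to discard the odd part.
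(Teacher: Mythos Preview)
Your generating-function packaging of $r$ is a genuinely different organization from the paper's. The paper never introduces a parameter $z$: it sets $G_n^{(r)}(x)=\sum_k\frac{(x)_k}{(1)_k}S_k(\{1\}^r)$ and $F_n^{(r)}(x)$ for the target sum, proves the recursion $G_{p-1}^{(r)}(x)\equiv_{p^2}-G_{p-1}^{(r-1)}(x)/x$ together with the telescoping $F_n^{(r)}(x+m)-F_n^{(r)}(x)=\sum_{j=0}^{m-1}G_n^{(r)}(x+j)/(x+j)$, and then shifts from $x$ to $sp$, where $F_{p-1}^{(r)}(sp)\equiv sp\,S_{p-1}(\{1\}^r,2)\equiv sp\,B_{p-r-2}$ is immediate. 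Your partial-fraction computation of the leading part of $G_1$ is correct and amounts to a clean re-proof of $\sum_{k}(-1)^k\binom{m}{k}S_k(\{1\}^r)/k=-H_m^{(r+1)}$, which the paper obtains from the same telescoping specialized at $x=-n$. So the two approaches agree on the main term, organized differently.

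The gap is exactly where you locate it: the $sp$-correction is only a plan. The structural problem is that the target coefficient $(-1)^rB_{p-r-2}(x)$ has no closed generating function in $r$ (the Bernoulli index moves with $r$), so after your reflection step you must still extract $[z^r]$ and invoke $H_m^{(t)}\equiv_p(-1)^t\bigl(B_{p-t}(x)-B_{p-t}\bigr)/t$ term by term. In your scheme the correction splits into the head weight $H_m-H_{m-k}$ and the tail $k>m$, and neither of the inner sums $\sum_{k\ge j}(-1)^k\binom{m}{k}\binom{k-1}{j-1}(H_m-H_{m-k})$ nor the reflected tail $\sum_{l}\frac{(1+z)_l}{(l+1)(l+1)_{m+1}}$ collapses without a further identity you have not supplied; the generating function buys you the main term but not this cancellation. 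In the paper's telescoping the whole correction is the single line $-(r+2)sp\,H_m^{(r+2)}+sp\,B_{p-r-2}$, which one application of the $H_m^{(t)}$ congruence turns into $-(-1)^rsp\,B_{p-r-2}(x)$. For part~ii) the difficulty compounds: you would also need, inside your coefficient extraction, the inputs $S_{p-1}(\{2\}^{r+1})\equiv_{p^2}2pB_{p-2r-3}/(2r+3)$ and $S_{p-1}(\{2\}^r,3)\equiv_p-2rB_{p-2r-3}$, and it is not shown how the second-order weights $(H_m-H_{m-k})^2$ and $H_m^{(2)}-H_{m-k}^{(2)}$ assemble into the constant $(1+3sr+2sr^2)/(2r+3)$.
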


Note that, when $r=0$, both \eqref{SI1} and \eqref{SI2} have been established by Zhi-Hong Sun in \cite{Sunzh:15}. Moreover, for the special value $x=1/2$, 
\eqref{SI1} and \eqref{SI2} yield
\begin{equation}\label{CI1}
\sum_{k=1}^{p-1} \frac{\binom{2k}{k}}{k 4^k}\cdot S_k(\{1\}^r)
\equiv_{p^2}
\begin{cases}
-H_{(p-1)/2}^{(r+1)}
&\text{if $r\equiv_2 0$},\vspace{3mm}\\
\frac{2^{r+2}-1}{2(r+2)}\,p B_{p-r-2} 
&\text{if $r\equiv_2 1$,}
\end{cases}
\end{equation}
and
\begin{equation}\label{CI2}
\sum_{k=1}^{p-1} \frac{\binom{2k}{k}^2}{k 16^k}\cdot
S_k(\{2\}^r)
\equiv_{p^3}
-2H_{(p-1)/2}^{(2r+1)}
-\frac{r(2^{2r+3}-1)}{2}\,p^2B_{p-2r-3}.
\end{equation}
For $r=1$, the congruence \eqref{CI2}  proves the conjecture \cite[Conjecture 5.3]{Sunzw:15}. 

In the last section we provide $q$-analogs of two binomial identities related to the congruences \eqref{SI1} and  \eqref{SI2}. 

\section{Proof of \eqref{SI1} in Theorem \ref{MT}}
By taking the partial fraction expansion of the rational function
$$x\to \frac{(x)_k}{(x)_n}$$
with $0\leq k<n$, we find
\begin{equation}\label{PDF1}
\sum_{k=0}^{n-1} \frac{(x)_k}{(1)_k}\cdot a_k=
(x)_n\sum_{j=0}^{n-1}\frac{(-1)^j T_j}{j!(n-1-j)!}\cdot \frac{1}{x+j}
\end{equation}
where $T_j$ is the binomial transform of the sequence $a_k$,
$$T_j:=\sum_{k=0}^j(-1)^k\binom{j}{k}\cdot a_k.$$
It is easy to see from \eqref{PDF1} that if $a_0,\dots,a_{p-1},x\in \mathbb{Z}_p$ then
\begin{equation}\label{A1}
\sum_{k=0}^{p-1} \frac{(x)_k}{(1)_k}\cdot a_k\equiv_{p} T_{{\langle-x\rangle_p}}.
\end{equation}%+sp\sum_{j=1, j\not={\langle-x\rangle_p}}^{p-1}\frac{T_j}{j-\langle-x\rangle_p}
In order to show  \eqref{SI1} we introduce the function
$$G_n^{(r)}(x):=\sum_{k=1}^{n} \frac{(x)_k}{(1)_k}\cdot S_k(\{1\}^r).$$
We have that
$$G_n^{(0)}(x)=\frac{(1+x)_{n}}{(1)_{n}}-1$$
and $S_{k}(\{1\}^r)= S_{k-1}(\{1\}^r)+S_{k}(\{1\}^{r-1})/k$ implies
\begin{equation}\label{G1}
G_n^{(r)}(x)=\frac{(1+x)_{n}}{(1)_{n}}\cdot S_{n}(\{1\}^r)-\frac{G_n^{(r-1)}(x)}{x}.
\end{equation}
Moreover
$$F_n^{(r)}(x+1)-F_n^{(r)}(x)=\frac{G_n^{(r)}(j)}{x}$$
where
$$F_n^{(r)}(x):=\sum_{k=1}^{n} \frac{(x)_k}{(1)_k}\cdot
\frac{S_k(\{1\}^r)}{k}.$$
Then, for any positive integer $m$,
\begin{equation}\label{F1}
F_n^{(r)}(x+m)-F_n^{(r)}(x)=
\sum_{j=0}^{m-1}\frac{G_n^{(r)}(x+j)}{x+j}.
\end{equation}
By \eqref{G1}, for $u=1,\dots,n$
$$G_n^{(r)}(-u)=\frac{G_n^{(r-1)}(-u)}{u}=\cdots=\frac{G_n^{(0)}(-u)}{u^r}=-\frac{1}{u^r}.$$
Hence by letting $x=-n$ and $m=n$ in \eqref{F1} we obtain the known identity (see \cite{He:99})
\begin{equation}\label{He}
\sum_{k=1}^n (-1)^k\binom{n}{k}\frac{S_k(\{1\}^r)}{k}=-H_n^{(r+1)}.
\end{equation}
Thus, for $a_k=\frac{S_k(\{1\}^r)}{k}$, we have that $T_j=-H_j^{(r+1)}$, and by \eqref{A1}, we already have the modulo $p$ version of \eqref{SI1}.
 
\begin{proof}[Proof of \eqref{SI1} in Theorem \ref{MT}] 
Since $sp=x+\langle-x\rangle_p$ it follows that
$$G_{p-1}^{(0)}(x) =\frac{(1+x)_{p-1}}{(1)_{p-1}}-1
\equiv_{p^2} \frac{sp}{x}-1.$$
By \cite[Theorem 1.6]{Zh:07}, 
$S_{p-1}(\{1\}^r)\equiv_p 0$ and therefore
$$G_{p-1}^{(r)}(x)\equiv_{p^2}
-\frac{G_{p-1}^{(r-1)}(x)}{x}\equiv_{p^2}\cdots
\equiv_{p^2} (-1)^r\frac{G_{p-1}^{(0)}(x)}{x^{r}}
\equiv_{p^2} \frac{(-1)^r sp}{x^{r+1}}-\frac{(-1)^r}{x^{r}}.
$$
Moreover
\begin{align*}
F_{p-1}^{(r)}(sp)&=
\sum_{k=1}^{p-1} \frac{(sp)_k}{(1)_k}\cdot
\frac{S_k(\{1\}^r)}{k}\equiv_{p^2}
\sum_{k=1}^{p-1} \frac{sp}{k}\cdot
\frac{S_k(\{1\}^r)}{k}\\
&=spS_{p-1}(\{1\}^{r},2))
\equiv_{p^2} 
spB_{p-r-2}
\end{align*}
where we used 
$S_{p-1}(\{1\}^r,2)\equiv_p B_{p-r-2}$ (see \cite[Theorem 4.5]{HHT:14}).

Finally, by \eqref{F1},
\begin{align*}
F_{p-1}^{(r)}(x)&\equiv_{p^2}\sum_{j=0}^{\langle-x\rangle_p-1}
\left(\frac{(-1)^r}{(x+j)^{r+1}}-\frac{(-1)^rsp}{(x+j)^{r+2}}\right)+spB_{p-r-2}\\
&\equiv_{p^2}-\sum_{j=1}^{\langle-x\rangle_p}
\frac{1}{(j-sp)^{r+1}}-sp\sum_{j=1}^{\langle-x\rangle_p}\frac{1}{j^{r+2}}+spB_{p-r-2}\\
&\equiv_{p^2}-H_{\langle-x\rangle_p}^{(r+1)}-
(r+2)spH_{\langle-x\rangle_p}^{(r+2)}+spB_{p-r-2}\\
&\equiv_{p^2}-H_{\langle-x\rangle_p}^{(r+1)}-(-1)^r spB_{p-r-2}\\
\end{align*}
where the last step uses the following congruence: for $2\leq t<p-1$ 
\begin{equation}\label{Hp}
H_{\langle-x\rangle_p}^{(t)}
\equiv_p \sum_{j=1}^{\langle-x\rangle_p} j^{p-1-t}=
\frac{B_{p-t}(\langle-x\rangle_p+1)-B_{p-t}}{p-t}
\equiv_p
(-1)^t\frac{B_{p-t}(x)-B_{p-t}}{t}
\end{equation}
which is an immediate consequence of \cite[Lemma 3.2]{Sunzh:00}.
\end{proof} 

\section{Proof of \eqref{SI2} in Theorem \ref{MT}}

We follow a similar strategy as outlined in the previous section.
We start by considering the partial fraction decomposition of the rational function
$$x\to\frac{(x)_k(1-x)_k}{(x)_n(1-x)_n}$$
with $0\leq k<n$. We have that
\begin{equation}\label{PDF2}
\sum_{k=0}^{n-1} \frac{(x)_k(1-x)_k}{(1)_k^2}\cdot a_k=
(x)_n(1-x)_n
\sum_{j=0}^{n-1}\frac{(-1)^j A_j}{(n+j)!(n-1-j)!}\left(\frac{1}{x+j}+\frac{1}{1-x+j}\right)
\end{equation}
where
$$A_j:=\sum_{k=0}^j(-1)^k\binom{j}{k}\binom{j+k}{k}\cdot a_k.$$
For $n\to \infty$, if the series is convergent, the identity \eqref{PDF2} becomes
$$\sum_{k=0}^{\infty} \frac{(x)_k(1-x)_k}{(1)_k^2}\cdot a_k=
\frac{\sin(\pi x)}{\pi}
\sum_{j=0}^{\infty}(-1)^j A_j\left(\frac{1}{x+j}+\frac{1}{1-x+j}\right).
$$
In many cases the transformed sequence $A_j$ has a \textit{nice} formula.
For example if $a_k=1/(k+z)$ then
$$A_j=\frac{(1-z)_j}{(z)_{j+1}}$$
and for $x=z=1/2$ we recover this series representations the Catalan's constant $G=\sum_{j=0}\frac{(-1)^j}{(2j+1)^2}$:
$$\sum_{k=0}^{\infty} \frac{\binom{2k}{k}^2}{(2k+1)16^k}=\frac{1}{2}\sum_{k=0}^{\infty} \frac{(1/2)_k^2}{(1)_k^2(k+1/2)}=
\frac{1}{2\pi}
\sum_{j=0}^{\infty}(-1)^j \frac{4}{(1/2+j)^2}=
\frac{8G}{\pi}.$$

As regards congruences we have the following result.

\begin{theorem}
Let $p$ be a prime with $a_0,\dots,a_{p-1},x\in \mathbb{Z}_p$. Then
\begin{equation}\label{A2}
\sum_{k=0}^{p-1} \frac{(x)_k(1-x)_k}{(1)_k^2}\cdot a_k
\equiv_{p^2} 
A_{\langle-x\rangle_p}+s(A_{p-1-\langle-x\rangle_p}-A_{\langle-x\rangle_p})
\end{equation}
For $x=1/2$ and $p>2$ then 
$$
\sum_{k=0}^{p-1} \frac{\binom{2k}{k}^2}{16^k}\cdot a_k
\equiv_{p^2} 
A_{(p-1)/2}.
$$
\end{theorem}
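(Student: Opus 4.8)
The plan is to start from the partial fraction identity \eqref{PDF2} evaluated at $n=p$, which expresses the truncated sum $\sum_{k=0}^{p-1}$ as
$$
(x)_p(1-x)_p\sum_{j=0}^{p-1}\frac{(-1)^jA_j}{(p+j)!\,(p-1-j)!}\left(\frac{1}{x+j}+\frac{1}{1-x+j}\right).
$$
The key observation is that the prefactor $(x)_p(1-x)_p$ is divisible by a power of $p$: writing $m:=\langle-x\rangle_p$, so that $x+m=sp$ and $1-x+(p-1-m)=1-sp$, each of the two Pochhammer products $(x)_p$ and $(1-x)_p$ contains exactly one factor that is a multiple of $p$ (namely $x+m=sp$ and $1-x+(p-1-m)$, the latter congruent to $1-sp$ but still carrying a factor of $p$ through $sp$ in $(1-x)_p$ precisely when one tracks the residue). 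So the first step is to determine the exact $p$-adic valuation of $(x)_p(1-x)_p$ and reduce it modulo $p^3$ or $p^4$ as needed to control the sum modulo $p^2$.

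Next I would examine each summand in the $j$-sum. Generically $x+j$ and $1-x+j$ are $p$-adic units, so the factor $1/((p+j)!(p-1-j)!)$ together with the numerator $(x)_p(1-x)_p$ contributes a factor of $p$ to that term, making most terms vanish modulo $p^2$ after we pull out the units. The terms that survive are exactly those where the denominator $x+j$ or $1-x+j$ is itself divisible by $p$, which happens at $j=m=\langle-x\rangle_p$ (where $x+j=sp$) and at $j=p-1-m$ (where $1-x+j=1-x+p-1-\langle-x\rangle_p\equiv -sp\pmod{p}$, hence is a $p$-multiple). At these two indices the factor $\frac{1}{x+j}$ or $\frac{1}{1-x+j}$ is of size $1/p$, which cancels one power of $p$ from the prefactor and promotes the corresponding $A_j$ to survive. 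So the second step is the careful local analysis at $j=m$ and $j=p-1-m$, expanding $(p+j)!$, $(p-1-j)!$, and the Pochhammer products to the required precision in $p$ and extracting the coefficients of $A_m$ and $A_{p-1-m}$.

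The third step is to pin down those two coefficients modulo $p^2$. I expect that after simplification the coefficient of $A_m$ comes out to $1-s$ and the coefficient of $A_{p-1-m}$ comes out to $s$, which reassembles into
$$
A_{m}+s\bigl(A_{p-1-m}-A_{m}\bigr)=A_{\langle-x\rangle_p}+s\bigl(A_{p-1-\langle-x\rangle_p}-A_{\langle-x\rangle_p}\bigr),
$$
giving \eqref{A2}. The final clause for $x=1/2$ is then immediate: there $s=0$ and $\langle-x\rangle_p=(p-1)/2$, so the $s$-term drops out and only $A_{(p-1)/2}$ remains, and since $(1/2)_k(1/2)_k/(1)_k^2=\binom{2k}{k}^2/16^k$, the stated congruence follows.

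The main obstacle will be the local expansion at the two surviving indices. Tracking the $p$-adic valuation of $(x)_p(1-x)_p$ and of the factorials $(p+j)!$, $(p-1-j)!$ simultaneously, and isolating the unit parts to enough precision to get a genuine modulo $p^2$ statement rather than just modulo $p$, requires bookkeeping of Wilson-type products and the first-order corrections $x+m=sp$, $1-x+(p-1-m)=1-sp$. Establishing that all other $j$ genuinely contribute $0$ modulo $p^2$—rather than merely modulo $p$—also needs care, since it relies on the prefactor supplying a full factor of $p$ that is not cancelled when both $x+j$ and $1-x+j$ are units.
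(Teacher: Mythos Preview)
Your overall framework is right—starting from \eqref{PDF2} at $n=p$ and isolating the indices $j=\langle-x\rangle_p$ and $j=p-1-\langle-x\rangle_p$—but the central claim that ``all other $j$ genuinely contribute $0$ modulo $p^2$'' is false as stated. Count valuations: generically $v_p\bigl((x)_p(1-x)_p\bigr)=2$, $v_p\bigl((p+j)!(p-1-j)!\bigr)=1$, and for $j\notin\{m,p-1-m\}$ both $x+j$ and $1-x+j$ are units. So each such term carries only a single factor of $p$, not $p^2$; the prefactor does \emph{not} supply enough to kill these terms individually. The paper handles this by a cancellation argument you are missing: it proves the symmetry $A_{p-1-j}\equiv_p A_j$ (via $\binom{p-1-j}{k}\binom{p-1-j+k}{k}\equiv_p\binom{j}{k}\binom{j+k}{k}$), reindexes the tail $\sum_{j>m}\frac{pA_j}{x+j}$ by $j\mapsto p-1-j$, and observes that it then cancels modulo $p^2$ against the non-singular part of $\sum_j\frac{pA_j}{1-x+j}$. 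Without this pairing lemma the generic terms do not disappear, and your plan cannot close the gap between mod $p$ and mod $p^2$.

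A smaller slip: for $x=1/2$ one has $\langle-x\rangle_p=(p-1)/2$ and hence $s=(1/2+(p-1)/2)/p=1/2$, not $s=0$. The specialization still collapses correctly, but for the reason the paper gives, namely $\langle-x\rangle_p=p-1-\langle-x\rangle_p$, so that $A_{p-1-\langle-x\rangle_p}-A_{\langle-x\rangle_p}=0$.
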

\begin{proof}  Rearranging \eqref{PDF2} in a convenient way, we have
$$
\sum_{k=0}^{p-1} \frac{(x)_k(1-x)_k}{(1)_k^2}\cdot a_k=
\frac{(x)_p(1-x)_p}{(1)_p^2}\binom{2p-1}{p-1}^{-1}
\sum_{j=0}^{p-1}(-1)^j\binom{2p-1}{p+j} A_j\left(\frac{p}{x+j}+\frac{p}{1-x+j}\right).
$$
If $0\leq k\leq j\leq p-1$ then $A_{p-1-j}\equiv_p A_{j}$ because
\begin{align*}
\binom{p-1-j}{k}\binom{p-1-j+k}{k}
&=\frac{(p-1-j)\cdots (p-j-k)(p-1-j+k)\cdots (p-j) }{(k!)^2}\\&\equiv_p 
\frac{(j+1)\cdots (j+k)(j-k+1)\cdots j }{(k!)^2}
=\binom{j}{k}\binom{j+k}{k}.
\end{align*}
Thus, since $\langle -x\rangle_p+\langle-(1-x)\rangle_p=p-1$, it follows that
\begin{align*}
\sum_{j=0}^{p-1}(-1)^j\binom{2p-1}{p+j} \frac{pA_j}{x+j}
&\equiv_{p^2}\sum_{j=0}^{\langle-x\rangle_p-1} \frac{pA_j}{x+j}+(-1)^{\langle-x\rangle_p}\binom{2p-1}{p+{\langle-x\rangle_p}} \frac{A_{\langle-x\rangle_p}}{s}+\sum_{j=\langle-x\rangle_p+1}^{p-1} \frac{pA_j}{x+j}\\
&\equiv_{p^2}\sum_{j=0}^{\langle-x\rangle_p-1} \frac{pA_j}{x+j}+(-1)^{\langle-x\rangle_p}\binom{2p-1}{p+{\langle-x\rangle_p}} \frac{A_{\langle-x\rangle_p}}{s}
-\!\!\!\!\!\!\sum_{j=0}^{\langle-(1-x)\rangle_p-1}\!\!\!
\frac{pA_{p-1-j}}{1-x+j}.
\end{align*}
Therefore 
\begin{align*}
\sum_{j=0}^{p-1}(-1)^j\binom{2p-1}{p+j} A_j\left(\frac{p}{x+j}+\frac{p}{1-x+j}\right)
&\equiv_{p^2}(-1)^{\langle-x\rangle_p}\binom{2p-1}{p+{\langle-x\rangle_p}} \left(\frac{A_{\langle-x\rangle_p}}{s} +\frac{A_{\langle-(1-x)\rangle_p}}{1-s}\right).
\end{align*}
Finally, by using
\begin{align*}
&\binom{2p-1}{p-1}  \equiv_{p^3} 1,\\
&\binom{2p-1}{p+j}  \equiv_{p^2} (-1)^{j}\left(1-2p H_{j}\right),\\
&\frac{(x)_p(1-x)_p}{(1)_p^2}\equiv_{p^2} s(1-s)\left(1+2p H_{\langle-x\rangle_p}\right),
\end{align*}
we are done. For $x=1/2$ it suffices to note that  
$$\langle-x\rangle_p=(p-1)/2= p-1-\langle-x\rangle_p.$$
\end{proof}

As an application of the previous theorem, we note that when $a_k=1$ then $A_j=(-1)^j$, and, by \eqref{A2}, it follows  that 
$$
\sum_{k=0}^{p-1} \frac{(x)_k(1-x)_k}{(1)_k^2}
\equiv_{p^2} (-1)^{\langle -x\rangle_p}$$
which has been established in \cite[Corollary 2.1]{Sunzh:14}.
Another example worth to be mentioned is 
$a_k=1/k^r$ for $k\geq 1$ (and $a_0=0$). Then by \cite[Theorem 1]{Pr:10}
$$A_j=-\sum_{1\cdot k_1+3\cdot k_3+\dots= r}
\frac{2^{k_1+k_3+\dots} (H_j^{(1)})^{k_1}(H_j^{(3)})^{k_3}\cdots}{1^{k_1}3^{k_3}\cdots k_1!k_3!\cdots}.$$
 
Now we consider the case $a_k=S_k(\{2\}^r)/k$.
Let
$$G_n^{(r)}(x):=\sum_{k=1}^{n} \frac{(x)_k(-x)_k}{(1)_k^2}\cdot S_k(\{2\}^r).$$
We have that
$$G_n^{(0)}(x)=\frac{(1+x)_{n}(1-x)_{n}}{(1)_{n}^2}-1,$$
and $S_{k}(\{2\}^r)= S_{k-1}(\{2\}^r)+S_{k}(\{2\}^{r-1})/k^2$ implies
\begin{equation}\label{G2}
G_n^{(r)}(x)=\frac{(1+x)_{n}(1-x)_{n}}{(1)_{n}^2}\cdot S_{n}(\{2\}^r)+\frac{G_n^{(r-1)}(x)}{x^2}.
\end{equation}
%which implies
%$$G_n^{(r)}(x)=
%\frac{(1+x)_{n}(1-x)_{n}}{(1)_{n}^2}
%\sum_{i=1}^r \frac{S_{n}(\{2\}^i)}{x^{2(r-i)}}+\frac{G_n^{(0)}(x)}{x^{2r}}$$
Moreover
$$F_n^{(r)}(x+1)-F_n^{(r)}(x)=\frac{2G_n^{(r)}(x)}{x}$$
where
$$F_n^{(r)}(x):=\sum_{k=1}^{n} \frac{(x)_k(1-x)_k}{(1)_k^2}\cdot
\frac{S_k(\{2\}^r)}{k}.$$
Hence
\begin{equation}\label{F2}
F_n^{(r)}(x+m)-F_n^{(r)}(x)=
2\sum_{j=0}^{m-1}\frac{G_n^{(r)}(x+j)}{x+j}.
\end{equation}

The next identity is a variation of \eqref{He} and it appears to be new.
\begin{theorem}
For any integers $n\geq 1$ and $r\geq 0$,
\begin{equation}\label{Ta}
\sum_{k=1}^{n} (-1)^k\binom{n}{k}\binom{n+k}{k}\frac{S_k(\{2\}^r)}{k}=-2H_{n}^{(2r+1)}.
\end{equation}
\end{theorem}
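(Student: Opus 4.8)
The plan is to mimic the derivation of \eqref{He} from the previous section, now using the two-sided machinery built around $F_n^{(r)}$ and $G_n^{(r)}$. The first step is to specialize $x=-n$ and recognize the left-hand side of \eqref{Ta}. Since
$$\frac{(-n)_k}{(1)_k}=(-1)^k\binom{n}{k}\qquad\text{and}\qquad\frac{(1+n)_k}{(1)_k}=\binom{n+k}{k},$$
the product $\frac{(-n)_k\,(1-(-n))_k}{(1)_k^2}$ equals $(-1)^k\binom{n}{k}\binom{n+k}{k}$, so the sum on the left of \eqref{Ta} is exactly $F_n^{(r)}(-n)$. It therefore suffices to evaluate $F_n^{(r)}(-n)$.

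The second step uses the telescoping identity \eqref{F2}. Taking $x=-n$ and $m=n$ there, and observing that $F_n^{(r)}(0)=0$ because $(0)_k=0$ for every $k\geq 1$, I obtain
$$F_n^{(r)}(-n)=-2\sum_{j=0}^{n-1}\frac{G_n^{(r)}(-n+j)}{-n+j}=-2\sum_{u=1}^{n}\frac{G_n^{(r)}(-u)}{-u},$$
after the change of index $u=n-j$. Thus the whole problem reduces to computing $G_n^{(r)}(-u)$ for the integers $u\in\{1,\dots,n\}$.

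The key step --- the exact analog of the computation $G_n^{(r)}(-u)=-1/u^{r}$ in the proof of \eqref{He} --- is to exploit the vanishing of a Pochhammer factor. For $1\le u\le n$ the factor $(1+x)_n$ becomes $(1-u)_n=\prod_{j=1}^{n}(j-u)$ at $x=-u$, and this product contains the zero factor at $j=u$. Hence the first summand on the right of \eqref{G2} drops out and the recurrence collapses to $G_n^{(r)}(-u)=G_n^{(r-1)}(-u)/u^2$. Iterating down to $r=0$ and using $G_n^{(0)}(-u)=\frac{(1-u)_n(1+u)_n}{(1)_n^2}-1=-1$ gives $G_n^{(r)}(-u)=-1/u^{2r}$. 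Substituting this into the expression above yields
$$F_n^{(r)}(-n)=-2\sum_{u=1}^{n}\frac{-1/u^{2r}}{-u}=-2\sum_{u=1}^{n}\frac{1}{u^{2r+1}}=-2H_n^{(2r+1)},$$
which is \eqref{Ta}. I expect no serious obstacle: the argument is structurally identical to the first one, and the only points demanding care are confirming that the boundary term $F_n^{(r)}(0)$ vanishes and that the factor $(1-u)_n$ is zero over the entire range $1\le u\le n$, since it is precisely this vanishing that makes the recurrence \eqref{G2} telescope cleanly.
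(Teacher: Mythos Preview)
Your proposal is correct and follows essentially the same approach as the paper: you specialize \eqref{F2} at $x=-n$, $m=n$, use $F_n^{(r)}(0)=0$, and evaluate $G_n^{(r)}(-u)=-1/u^{2r}$ via the collapse of the recurrence \eqref{G2} when $(1-u)_n=0$, exactly as the paper does. Your version is slightly more explicit about why the boundary term vanishes and why the Pochhammer factor is zero, but the argument is the same.
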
 
\begin{proof} By \eqref{G2}, for $u=1,\dots,n$, 
$$G_n^{(r)}(-u)=\frac{G_n^{(r-1)}(-u)}{u^2}=\dots=\frac{G_n^{(0)}(-u)}{u^{2r}}=-\frac{1}{u^{2r}}.$$
Hence by letting $x=-n$ and $m=n$ in \eqref{F2}
\begin{align*}
\sum_{k=1}^{n} (-1)^k\binom{n}{k}\binom{n+k}{k}\frac{S_k(\{2\}^r)}{k}&=F^{(r)}_n(-n)=F_n^{(r)}(0)-2\sum_{j=0}^{n-1}\frac{G_n^{(r)}(-n+j)}{-n+j}\\
&=2\sum_{j=0}^{n-1}\frac{1}{(-n+j)^{2r+1}}=-2H_{n}^{(2r+1)}.
\end{align*}
\end{proof}

Thus by applying $\eqref{A2}$ we find a modulo $p^2$ version of $\eqref{SI2}$. 
A more refined reasoning will lead us to the $p^3$ congruence.

\begin{proof}[Proof of \eqref{SI2} in Theorem \ref{MT}] 
Since $sp=x+\langle-x\rangle_p$,
$$G_{p-1}^{(0)}(x) =\frac{(1+x)_{p-1}(1-x)_{p-1}}{(1)_{p-1}^2}-1
\equiv_{p^3} -\frac{s(1-s)p^2}{x^2}-1$$
By \cite[Theorem 1.6]{Zh:07}, 
$S_{p-1}(\{2\}^r)\equiv_p 0$ and therefore
$$G_{p-1}^{(r)}(x)\equiv_{p^3}
\frac{G_{p-1}^{(r-1)}(x)}{x^{2}}\equiv_{p^3}\cdots
\equiv_{p^3} \frac{G_{p-1}^{(0)}(x)}{x^{2r}}
\equiv_{p^3} -\frac{s(1-s)p^2}{x^{2r+2}}-\frac{1}{x^{2r}}.
$$
It follows that
\begin{align*}
F_{p-1}^{(r)}(sp)-F_{p-1}^{(r)}(x)
&\equiv_{p^3} 
2\sum_{j=0}^{\langle-x\rangle_p-1}\frac{G_{p-1}^{(0)}(x+j)}{(x+j)^{2r+1}}\\
&\equiv_{p^3}
-2s(1-s)p^2\sum_{j=1}^{\langle-x\rangle_p}\frac{1}{j^{2r+3}}-2\sum_{j=0}^{\langle-x\rangle_p-1}\frac{1}{(x+j)^{2r+1}}.
\end{align*}
By \eqref{Hp}
$$\sum_{j=1}^{\langle-x\rangle_p}\frac{1}{j^{2r+3}}=H_{\langle-x\rangle_p}^{(2r+3)}
\equiv_p
-\frac{B_{p-2r-3}(x)-B_{p-2r-3}}{2r+3}.$$
Moreover
\begin{align*}
F_{p-1}^{(r)}(sp)&=
\sum_{k=1}^{p-1} \frac{(sp)_k(1-sp)_k}{(1)_k^2}\cdot
\frac{S_k(\{2\}^r)}{k}\\
&\equiv_{p^3}
\sum_{k=1}^{p-1} \frac{sp(k-sp)}{k^2}\cdot
\frac{S_k(\{2\}^r)}{k}\\
&=sp\sum_{k=1}^{p-1}
\frac{S_k(\{2\}^r)}{k^2}
-p^2s^2\sum_{k=1}^{p-1}
\frac{S_k(\{2\}^r)}{k^3}\\
&=spS_{p-1}(\{2\}^{r+1}))
-p^2s^2S_{p-1}(\{2\}^{r},3))\\
&\equiv_{p^3} 
sp\frac{2pB_{p-2r-3}}{2r+3}
+p^2s^22rB_{p-2r-3}\\
&\equiv _{p^3}
\frac{2sp^2(1+sr(2r+3))B_{p-2r-3}}{2r+3}
\end{align*}
where we used
$$\frac{(sp)_k(1-sp)_k}{(1)_k^2}
=\frac{sp(k-sp)}{k^2}\cdot \frac{(1+sp)_{k-1}(1-sp)_{k-1}}{ (1)_{k-1}^2}\equiv_{p^3}\frac{sp(k-sp)}{k^2}$$
and the congruences
$$S_{p-1}(\{2\}^r)\equiv_{p^2} \frac{2pB_{p-2r-1}}{2r+1}\quad\text{and}\quad
S_{p-1}(\{2\}^r,3)\equiv_p -2rB_{p-2r-3}.$$
which have been established in \cite[Theorem 1.6]{Zh:07} in \cite[Theorem 4.1]{HHT:14} respectively.
Finally,
\begin{align*}
F_{p}^{(r)}(x)
&\equiv_{p^3} 
\frac{2sp^2(1+sr(2r+3))B_{p-2r-3}}{2r+3}
-\frac{2s(s-1)p^2(B_{p-2r-3}(x)-B_{p-2r-3})}{2r+3}\\
&\qquad\qquad+2\sum_{j=0}^{\langle-x\rangle_p-1}\frac{1}{(x+j)^{2r+1}}\\
&\equiv_{p^3}
2\sum_{j=0}^{\langle-x\rangle_p-1}\frac{1}{(x+j)^{2r+1}}
+\frac{2s(1-s)}{2r+3}p^2B_{p-2r-3}(x)
\\&\qquad\qquad
+\frac{2s^2(r+1)(2r+1)}{2r+3}p^2 B_{p-2r-3}
\end{align*}
\end{proof}

We observe  that \eqref{CI2} follows by letting $x=1/2$. Then $\langle-x\rangle_p-1=(p-1)/2$, $B_{2n}(1/2)=(2^{1-2n}-1)B_{2n}$ and for $p-4>t>1$
$$H^{(t)}_{(p-1)/2}\equiv
\begin{cases}
\frac{t(2^{t+1}-1)}{2(t+1)}\,p B_{p-t-1} \pmod{p^2}
&\text{if $t\equiv_2 0$},\vspace{3mm}\\
-\frac{(2^{t}-2)}{t}\, B_{p-t}    \qquad \pmod{p}
&\text{if $t\equiv_2 1$}.
\end{cases}$$
see \cite[Theorem 5.2]{Sunzh:00}.

\section{Final remarks: $q$-analogs of \eqref{He} and \eqref{Ta}}

It is interesting to note that identities \eqref{He} and \eqref{Ta} have both a $q$-version (the first one appears in \cite{Pr:00}).

\begin{theorem} For any integers $n\geq 1$ and $r\geq 0$,
\begin{equation}\label{Heq}
\sum_{k=1}^n(-1)^k\qbin{n}{k}q^{\binom{k}{2}-(n-1)k}\cdot \frac{S_k(\{1\}^r;q)}{1-q^k}
=-\sum_{k=1}^n\frac{q^{rk}}{(1-q^k)^{r+1}}
\end{equation}
and 
\begin{equation}\label{Taq}
\sum_{k=1}^n(-1)^k\qbin{n}{k}\qbin{n+k}{k} q^{\binom{k}{2}-(n-1)k} 
\cdot \frac{S_k(\{2\}^r;q)}{1-q^k}
=-\sum_{k=1}^n\frac{(1+q^k)q^{rk}}{(1-q^{k})^{2r+1}}
\end{equation}
where $\qbin{m}{k}$ is the  Gaussian binomial coefficient 
$$\qbin{m}{k}=\left\{
\begin{array}{ll}
\frac{(1-q^m)(1-q^{m-1})\cdots (1-q^{m-k+1})}{(1-q^k)(1-q^{k-1})\cdots (1-q)} 
&\mbox{if $0\leq k\leq m$},\\[3pt]
0 &\mbox{otherwise},
\end{array}\right.$$
and
$$S_n(t_1,\dots,t_r;q):=\sum_{1\le j_1\le\cdots\le j_r\le n}\frac{q^{j_1+\dots +j_r}}{(1-q^{j_1})^{t_1}\cdots (1-q^{j_r})^{t_r}}.$$
\end{theorem}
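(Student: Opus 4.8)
The plan is to run, in the $q$-world, the same difference-calculus argument that produced \eqref{He} and \eqref{Ta}. Throughout I replace the continuous variable $x$ by a parameter $a$ (morally $a=q^x$), the unit shift $x\mapsto x+1$ by $a\mapsto qa$, and the ``negative integer'' specializations $x=-u$ by $a=q^{-u}$. The starting observation is that the left-hand sides of \eqref{Heq} and \eqref{Taq} are the values at $a=q^{-n}$ of
\[
\mathcal{F}_n^{(r)}(a):=\sum_{k=1}^{n}\frac{(a;q)_k}{(q;q)_k}\,\frac{q^k\,S_k(\{1\}^r;q)}{1-q^k}
\quad\text{and}\quad
\mathcal{F}_n^{(r)}(a):=\sum_{k=1}^{n}\frac{(a;q)_k(a^{-1}q;q)_k}{(q;q)_k^2}\,\frac{q^k\,S_k(\{2\}^r;q)}{1-q^k}
\]
respectively, where $(\cdot;q)_k$ is the $q$-Pochhammer symbol. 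This rests on the elementary identities $(-1)^k\qbin{n}{k}q^{\binom{k}{2}-(n-1)k}=q^k(q^{-n};q)_k/(q;q)_k$ and $\qbin{n+k}{k}=(q^{n+1};q)_k/(q;q)_k$, the latter being $(a^{-1}q;q)_k/(q;q)_k$ at $a=q^{-n}$. For the first identity this re-derives Prodinger's formula \cite{Pr:00}; the point here is to treat both uniformly.

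For each case I introduce a companion function $\mathcal{G}_n^{(r)}(a)$, obtained from $\mathcal{F}_n^{(r)}$ by dropping the factor $q^k/(1-q^k)$ and, in the second case, replacing $(a^{-1}q;q)_k$ by $(a^{-1};q)_k$ (the $q$-analogue of the passage from $(1-x)_k$ to $(-x)_k$ carried out before \eqref{G2}). The two structural ingredients are then: (a) a $q$-shift difference relation $\mathcal{F}_n^{(r)}(qa)-\mathcal{F}_n^{(r)}(a)=\rho(a)\,\mathcal{G}_n^{(r)}(a)$ for a suitable rational $\rho$, which follows from the elementary ratio $(qa;q)_k/(a;q)_k=(1-q^k a)/(1-a)$ and its $a^{-1}$ counterpart exactly as \eqref{F1} and \eqref{F2} arise; and (b) a $q$-recurrence lowering $r$, the analogue of \eqref{G1} and \eqref{G2}, coming from $S_k(\{1\}^r;q)=S_{k-1}(\{1\}^r;q)+\frac{q^k}{1-q^k}S_k(\{1\}^{r-1};q)$ (and its order-two analogue) together with one summation-by-parts step.

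The crux is the key lemma evaluating $\mathcal{G}_n^{(r)}$ at the truncating points $a=q^{-u}$, $1\le u\le n$, where $(q^{-u};q)_k$ kills all $k>u$. By induction on $r$ through (b) one obtains, in the first case, the closed form $\mathcal{G}_n^{(r)}(q^{-u})=-q^{ru}/(1-q^u)^{r}$, and in the second a similar closed form carrying the extra factor $1+q^u$; the base cases $r=0$ amount to a vanishing terminating $q$-binomial sum, since the relevant $q$-hockey-stick evaluation produces $(q^{1-u};q)_u=0$. Telescoping (a) from $a=q^{-n}$ down to $a=1$, where $\mathcal{F}_n^{(r)}(1)=0$ because $(1;q)_k=0$ for $k\ge1$, then collapses the left-hand side to $-\sum_{u=1}^{n}\rho(q^{-u})\,\mathcal{G}_n^{(r)}(q^{-u})$, which is exactly the right-hand side of \eqref{Heq}, resp.\ \eqref{Taq}. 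I expect the main obstacle to be the second identity: because $(a;q)_k$ and $(a^{-1}q;q)_k$ transform differently under $a\mapsto qa$, both the difference relation (a) and the recurrence (b) are heavier there, and pinning down every power of $q$ so that the telescoped sum matches the precise shape $(1+q^k)q^{rk}/(1-q^k)^{2r+1}$ demands careful bookkeeping; the $r=0$ normalization and the near-symmetric role of $a\leftrightarrow a^{-1}q$ are the places most likely to hide a sign or exponent slip.
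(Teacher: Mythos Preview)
Your proposal is correct and follows essentially the same route as the paper: the paper defines $F_n^{(r)}(u)$ and $G_n^{(r)}(u)$ with an integer parameter $u$ in the Gaussian binomials (your $a=q^{-u}$), proves the $r$-lowering recurrence $G_n^{(r)}(u)=q^{u}(1-q^{u})^{-2}G_n^{(r-1)}(u)$ and the difference relation $F_n^{(r)}(u)-F_n^{(r)}(u-1)=\frac{1+q^{u}}{1-q^{u}}G_n^{(r)}(u)$, and telescopes from $u=0$ (where $F$ vanishes) to $u=n$, exactly as you outline. The only distinction is cosmetic---you parametrize by a continuous $a$ via $q$-Pochhammer symbols and shift $a\mapsto qa$, while the paper works directly with the integer $u$---so no genuinely different idea is involved.
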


\begin{proof} We show \eqref{Ta} and we leave the proof of other one to the interested reader. The procedure is quite similar to the one given for the corresponding ordinary identity \eqref{Ta}.  Let
$$G_n^{(r)}(u):=\sum_{k=1}^n(-1)^k\qbin{u}{k}\qbin{u+k-1}{k} q^{\binom{k}{2}-(u-1)k} 
\cdot S_k(\{2\}^r;q).$$
Then for $u=1,\dots,n$, $G_n^{(0)}(u)=-1$ and
$$G_n^{(r)}(u)=\frac{q^u G_n^{(r-1)}}{(1-q^u)^2}=\dots=\frac{q^{ru }G_n^{(0)}(u)}{(1-q^u
)^{2r}}=-\frac{q^{ru }}{(1-q^u)^{2r}}.$$
Moreover
$$F_n^{(r)}(u)-F_n^{(r)}(u-1)=\frac{(1+q^u)G_n^{(r)}(u)}{(1-q^u)}
=-\frac{(1+q^u)q^{ru }}{(1-q^u)^{2r+1}}$$
where
$$F_n^{(r)}(u):=\sum_{k=1}^n(-1)^k\qbin{u}{k}\qbin{u+k}{k} q^{\binom{k}{2}-(u-1)k} 
\cdot S_k(\{2\}^r;q).$$
Thus, since $F_n^{(0)}(n)=0$,
\begin{align*}
F_n^{(r)}(n)&=\sum_{u=1}^n\frac{(1+q^u)G_n^{(r)}(u)}{(1-q^u)}+F_n^{(0)}(n)
=-\sum_{u=1}^n\frac{(1+q^u)q^{ru}}{(1-q^{u})^{2r+1}}
\end{align*}
and the proof is complete.
\end{proof}


\begin{thebibliography}{99}

\bibitem{He:99} V. Hern\'andez, {\it Solution IV of problem 10490 - A reciprocal summation identity},
Am. Math. Mon.  {\bf 106} (1999), 589-590.


\bibitem{HHT:14} Kh. Hessami Pilehrood, T. Hessami Pilehrood, and R. Tauraso, {\it New properties of multiple harmonic sums modulo $p$ and $p$-analogues of Leshchiner's series}, Trans. Am. Math. Soc.  {\bf 366} (2014), 3131-3159.


\bibitem{Pr:00} H. Prodinger,
\emph{A q-analogue of a formula of Hern\'andez obtained by inverting a result of Dilcher},
Australas. J. Comb. \textbf{21} (2000), 271--274.

\bibitem{Pr:10} H. Prodinger,
\emph{Identities involving harmonic numbers that are of interest for physicists},
Util. Math. \textbf{83} (2010), 291--299.

\bibitem{Sunzh:00} Z. H. Sun, 
{\it Congruences concerning Bernoulli numbers and Bernoulli polynomials}, 
Discrete Appl. Math. {\bf 105} (2000), 193--223. 

\bibitem{Sunzh:14} Z. H. Sun,
\emph{Generalized Legendre polynomials and related supercongruences}, 
J. Number Theory \textbf{143} (2014), 293--319.

\bibitem{Sunzh:15} Z. H. Sun, 
{\it Super congruences concerning Bernoulli polynomials}, 
Int. J. Number Theory {\bf 11} (2015), 2393--2404. 

\bibitem{Sunzw:15} Z. W. Sun, 
{\it A new series for $\pi^3$ and related congruences}, 
Internat. J. Math. {\bf 26} (2015), ID 1550055, 23 pp.
  
\bibitem{Ta:10} R. Tauraso,
{\it Congruences involving alternating multiple harmonic sum},
Electron. J. Combin., \#R16 (2010).

\bibitem{Ta:12} R. Tauraso,
{\it  Supercongruences for a truncated hypergeometric series},
Integers \textbf{12} (2012), \#A45, 12 pp.

\bibitem{Zh:07} J. Zhao,
{\it Bernoulli numbers, Wolstenholme's theorem, and $p^5$ variations of Lucas' theorem},
J. Number Theory {\bf 123} (2007), 18--26.

\end{thebibliography}
\end{document}